\begin{document}
\providecommand{\keywords}[1]{\textbf{\textit{Keywords: }} #1}
\newtheorem{thm}{Theorem}[section]
\newtheorem{lemma}[thm]{Lemma}
\newtheorem{prop}[thm]{Proposition}
\newtheorem{cor}[thm]{Corollary}
\theoremstyle{definition}
\newtheorem{defi}[thm]{Definition}
\theoremstyle{remark}
\newtheorem{remark}[thm]{Remark}
\newtheorem{prob}[thm]{Problem}
\newtheorem{conjecture}[thm]{Conjecture}
\newtheorem{ques}[thm]{Question}

\newcommand{\cc}{{\mathbb{C}}}   
\newcommand{\ff}{{\mathbb{F}}}  
\newcommand{\nn}{{\mathbb{N}}}   
\newcommand{\qq}{{\mathbb{Q}}}  
\newcommand{\rr}{{\mathbb{R}}}   
\newcommand{\zz}{{\mathbb{Z}}}  
\newcommand{\K}{\mathbb{K}}
\newcommand{\ra}{\rightarrow}
\newcommand{\Gal}{\textrm{Gal}}

\title{A note on invariable generation of nonsolvable permutation groups}
\author{Joachim K\"onig}
\address{Department of Mathematics Education, Korea National University of Education, Cheongju, South Korea}
\email{jkoenig@knue.ac.kr}
\author{Gicheol Shin}
\address{Department of Mathematics Education, Seowon University, Cheongju, South Korea}
\email{gshin@ucdavis.edu}
\begin{abstract}
We prove a result on the asymptotic proportion of randomly chosen pairs $(\sigma,\tau)$ of permutations in the symmetric group $S_n$ which ``invariably" generate a nonsolvable subgroup, i.e.,  
whose cycle structures cannot possibly both occur in the same solvable subgroup of $S_n$.
As an application, we obtain that for a large degree ``random" integer polynomial $f$, reduction modulo two different primes can be expected to suffice to prove the nonsolvability of $\Gal(f/\mathbb{Q})$.
\end{abstract}

\subjclass[2010]{Primary 05A05, 20B35; Secondary 11R32.}
\keywords{Combinatorics; random permutations; permutation groups; solvable groups; Galois groups.}
\maketitle

\section{Introduction and main result}


This paper is motivated by the following, at this point deliberately vaguely worded question:

\begin{ques}
\label{ques:nonsolv}
Given a ``random" integer polynomial $f\in \mathbb{Z}[X]$, how ``difficult" is it to verify the nonsolvability of the Galois group $\Gal(f/\mathbb{\mathbb{Q}})$?
\end{ques}

To begin with, the precise meaning of ``random polynomial" is up to the choice of a concrete model. For our purposes, we essentially mean ``a degree $n$ polynomial whose Galois group is  $A_n$ or $S_n$" (although of course Question \ref{ques:nonsolv} is to be understood such that whoever aims at proving nonsolvability is provided with no prior information on the Galois group of the concrete polynomial), or slightly more general (and in order to allow a notion of randomness) ``a polynomial $f$ chosen from a family in such a way that $\Gal(f/\mathbb{Q})$ is almost certainly alternating or symmetric". The latter is known to hold for several versions of random polynomials.
For example, a famous result by van der Waerden (\cite{vdW}) asserts that for any fixed degree $n\in \mathbb{N}$ and for $N\to \infty$, the proportion of polynomials with Galois group $S_n$ among degree-$n$ integer polynomials with coefficients in $\{-N,\dots, N\}$ tends to $1$.
More recently, Bary-Soroker and Kozma showed in \cite{BSK} that if $N$ is divisible by four distinct primes, then for $n\to \infty$, the probability of a degree-$n$ polynomial with coefficients chosen uniformly at random from the set $\{1,\dots, N\}$ to have Galois group containing $A_n$ tends to $1$. This viewpoint (i.e., degree tending to infinity) will be more relevant for us in view of Theorem \ref{thm:main} below.

Next, regarding the difficulty of verification mentioned in Question \ref{ques:nonsolv}, we only consider one approach to obtain information about the Galois group of an integer polynomial, based on the following well-known criterion on modulo-$p$ reduction due to Dedekind.\footnote{While Dedekind's criterion of course in general cannot answer all questions about the Galois group, it is considerably less expensive than other approaches such as resolvent methods.}

\begin{thm}[Dedekind]
\label{thm:dedekind}
Let $f\in \mathbb{Z}[X]$ be a separable polynomial of degree $n$ with Galois group $G=\Gal(f/\mathbb{Q})\le S_n$, let and $p$ be a prime dividing neither the leading coefficient of $f$ nor the discriminant of $f$. Then the Galois group of the mod-$p$ reduction $\overline{f}$ of $f$ (acting on the roots of $\overline{f}$) embeds as a permutation group into $G$. In particular, if $d_1,\dots, d_r$ are the degrees of the irreducible factors of $\overline{f}$ over $\mathbb{F}_p$, then $G$ contains an element whose cycle lengths are exactly $d_1,\dots, d_r$.
\end{thm}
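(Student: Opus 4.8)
The plan is to realize $\Gal(\overline f/\ff_p)$ as a decomposition group inside $G$ at a prime above $p$ in the splitting field of $f$. Since $p$ does not divide the leading coefficient $a$ of $f$, the scaled polynomial $g:=a^{-1}f$ is monic with coefficients in the localization $\zz_{(p)}$, has the same splitting field $K$ over $\qq$ as $f$, and reduces to $\overline g=\overline a^{\,-1}\overline f$; as $\operatorname{disc}(f)=a^{2n-2}\operatorname{disc}(g)$ with $a$ a unit in $\zz_{(p)}$, the hypothesis $p\nmid\operatorname{disc}(f)$ gives that $\operatorname{disc}(g)=\prod_{i<j}(\alpha_i-\alpha_j)^2$ is a unit in $\zz_{(p)}$, where $\alpha_1,\dots,\alpha_n\in K$ are the roots of $g$. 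First I would introduce the $\zz_{(p)}$-subalgebra $A:=\zz_{(p)}[\alpha_1,\dots,\alpha_n]$ of $K$, which is a domain, finite and free over $\zz_{(p)}$ of rank $[K:\qq]=|G|$, and stable under $G$; then I would fix a maximal ideal $\mathfrak P\subseteq A$ containing $p$, so that $A/\mathfrak P$ is a finite field extension of $\ff_p$.

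The role of the discriminant hypothesis is that the reductions $\overline{\alpha_1},\dots,\overline{\alpha_n}\in A/\mathfrak P$ are pairwise distinct, since their differences are units. Consequently $\overline g=\prod_i(X-\overline{\alpha_i})$ is separable over $\ff_p$; the $\ff_p$-algebra $A/pA$ is étale, being a quotient of the tensor product over $\ff_p$ of the field extensions $\ff_p[\overline{\alpha_i}]$, each of which is étale by separability; and $A/\mathfrak P=\ff_p(\overline{\alpha_1},\dots,\overline{\alpha_n})$ is exactly the splitting field of $\overline f$ over $\ff_p$. Thus $\Gal(\overline f/\ff_p)=\Gal\big((A/\mathfrak P)/\ff_p\big)$, and reduction modulo $\mathfrak P$ furnishes a $G$-equivariant bijection $\{\alpha_1,\dots,\alpha_n\}\to\{\overline{\alpha_1},\dots,\overline{\alpha_n}\}$.

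Next I would bring in the decomposition group $D:=\{\sigma\in G:\sigma\mathfrak P=\mathfrak P\}$. Each $\sigma\in D$ descends to an $\ff_p$-automorphism $\overline\sigma$ of $A/\mathfrak P$, which defines a homomorphism $\rho\colon D\to\Gal\big((A/\mathfrak P)/\ff_p\big)$ compatible with the actions on $\{\alpha_i\}$ and $\{\overline{\alpha_i}\}$ under the above bijection. The map $\rho$ is injective: if $\overline\sigma=\mathrm{id}$ then $\overline{\sigma\alpha_i}=\overline{\alpha_i}$ for all $i$, forcing $\sigma\alpha_i=\alpha_i$ by distinctness of the $\overline{\alpha_i}$, hence $\sigma=\mathrm{id}$. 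It is surjective by the classical decomposition theory; in this unramified situation one can also see it by a dimension count: $G$ permutes the maximal ideals of $A$ above $p$ transitively, with isomorphic residue fields, so if there are $s$ of them then $|G|=s\cdot|D|$, whereas $\dim_{\ff_p}(A/pA)=\operatorname{rank}_{\zz_{(p)}}A=|G|$ also equals $s\cdot[A/\mathfrak P:\ff_p]$, giving $[A/\mathfrak P:\ff_p]=|D|$. Hence $\rho$ is an isomorphism, and it identifies $\Gal(\overline f/\ff_p)$ with the subgroup $D\le G\le S_n$ as permutation groups on the roots — which is the first assertion.

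For the ``in particular'' part, recall that $\Gal(\overline f/\ff_p)$ is cyclic, generated by the Frobenius $\varphi\colon x\mapsto x^p$. On the root $\overline\alpha$ of an irreducible factor $h$ of $\overline f$ of degree $d$, the elements $\overline\alpha,\overline\alpha^{\,p},\dots,\overline\alpha^{\,p^{d-1}}$ are the $d$ distinct roots of $h$, so $\varphi$ restricts there to a single $d$-cycle; ranging over all irreducible factors, $\varphi$ has cycle lengths exactly $d_1,\dots,d_r$, and $\rho^{-1}(\varphi)\in G$ is the desired element. I expect the main obstacle to be the surjectivity of $\rho$ — that is, invoking or reproving the standard decomposition theory, namely transitivity of $G$ on the primes above $p$ and the ensuing dimension count — together with the minor bookkeeping relating $\operatorname{disc}(f)$ to the product of root differences after scaling $f$ to be monic over $\zz_{(p)}$; the injectivity of $\rho$ and the cycle-type computation for $\varphi$ are then routine.
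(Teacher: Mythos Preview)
The paper does not prove this theorem; it is quoted as a classical background result attributed to Dedekind, with no argument supplied. So there is no ``paper's own proof'' to compare against.

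Your argument is a correct and standard proof of Dedekind's criterion via decomposition groups. The key steps --- scaling to a monic polynomial over $\zz_{(p)}$, using the discriminant hypothesis to ensure the reduced roots $\overline{\alpha_i}$ remain pairwise distinct, identifying $A/\mathfrak P$ with the splitting field of $\overline f$, showing that the decomposition group $D$ maps isomorphically onto $\Gal((A/\mathfrak P)/\ff_p)$, and reading off the cycle type of Frobenius --- are all sound. Two minor points of presentation: (i) the phrase ``quotient of the tensor product of the field extensions $\ff_p[\overline{\alpha_i}]$'' is slightly imprecise, since what you really use is that $A/pA$ is a quotient of $\bigotimes_i (\zz_{(p)}[\alpha_i]/p)\cong\bigotimes_i \ff_p[X]/(\overline{m_i})$, and each factor is \'etale (not necessarily a field) because $\overline{m_i}$ divides the separable $\overline g$; the conclusion that $A/pA$ is \'etale is unchanged. (ii) Your dimension count for surjectivity of $\rho$ presupposes that $G$ acts transitively on the primes above $p$; this follows from $A^G=\zz_{(p)}$ (since $A^G\subseteq K^G\cap A=\qq\cap A$ is integral over the integrally closed ring $\zz_{(p)}$) via the usual norm argument, and you rightly flag it as the point requiring ``classical decomposition theory''. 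Neither of these is a genuine gap.
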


Conversely, given $f\in \mathbb{Z}[X]$ with Galois group $G\le S_n$, the existence of primes whose reduction yields any prescribed cycle structure inside $G$ is guaranteed and quantified by Frobenius' density theorem (later famously strengthened by Chebotarev).

\begin{thm}[Frobenius]
\label{thm:frob}
Let $f\in \mathbb{Z}[X]$ be a separable degree-$n$ polynomial with Galois group $G\le S_n$, and let $C:=[c_1,\dots, c_r]$ be a cycle type in $G$. Then the asymptotic density of primes $p$ for which mod-$p$ reduction of $f$ in Theorem \ref{thm:dedekind} yields cycle type $C$ equals the proportion of elements of cycle type $C$ in $G$. 
\end{thm}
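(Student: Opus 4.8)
The statement is the classical density theorem of Frobenius, specialized from \emph{divisions} (rational conjugacy classes) to cycle types, and the plan is to obtain it by combining the standard local dictionary between Frobenius elements and factorization types with Frobenius' theorem for unions of conjugacy classes.

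First I would pass to the splitting field $L$ of $f$ over $\mathbb{Q}$, so that $G=\Gal(L/\mathbb{Q})$ acts faithfully on the $n$ roots of $f$; this is the embedding $G\le S_n$ of the theorem. For every prime $p$ dividing neither the leading coefficient nor the discriminant of $f$ --- all but finitely many --- the reduction $\overline f$ is separable of degree $n$ over $\mathbb{F}_p$, the prime $p$ is unramified in $L$, and there is a well-defined Frobenius conjugacy class $\mathrm{Frob}_p\subseteq G$. The standard refinement of the local computation behind Theorem \ref{thm:dedekind} shows that for such $p$ the degrees $d_1,\dots,d_r$ of the irreducible factors of $\overline f$ over $\mathbb{F}_p$ are exactly the cycle lengths, in $S_n$, of any element of $\mathrm{Frob}_p$; thus $\overline f$ has factorization type $C$ if and only if $\mathrm{Frob}_p\subseteq S_C$, where $S_C:=\{\sigma\in G:\ \sigma\text{ has cycle type }C\}$.

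Next I would record two elementary facts about $S_C$. It is a disjoint union of $G$-conjugacy classes $K_1,\dots,K_m$, since conjugation preserves cycle type; and if $\sigma$ has cycle type $C$ and $\gcd(k,\mathrm{ord}(\sigma))=1$, then $k$ is coprime to each cycle length of $\sigma$, so $\sigma^k$ again has cycle type $C$ --- hence $S_C$ is in fact a union of divisions in Frobenius' sense. Consequently only the weaker density theorem of Frobenius, rather than the full strength of Chebotarev's, is needed: by that theorem the set of primes $p$ with $\mathrm{Frob}_p=K_i$ has density $|K_i|/|G|$, and since the finitely many excluded primes have density zero, summing over $i$ gives that the density of primes for which mod-$p$ reduction yields cycle type $C$ equals $\sum_{i=1}^{m}|K_i|/|G|=|S_C|/|G|$, which is the proportion of elements of cycle type $C$ in $G$.

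The only substantial ingredient is the density theorem invoked above; proving it from scratch is a matter of analytic number theory --- either Frobenius' original comparison of Dedekind zeta functions of the fixed fields of the cyclic subgroups of $G$, or the modern route via the holomorphy and non-vanishing of Artin $L$-functions on the line $\mathrm{Re}(s)=1$. Everything else, namely the identification of cycle type with factorization type at an unramified prime and the observation that $S_C$ is a union of rational conjugacy classes, is routine. Taking Frobenius' density theorem as a black box, as is standard, the argument above is complete.
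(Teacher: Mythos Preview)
Your argument is correct and is the standard derivation: identify the factorization type of $\overline f$ with the cycle type of any representative of $\mathrm{Frob}_p$, observe that the set $S_C$ of elements of a fixed cycle type is a union of $G$-conjugacy classes (indeed of rational conjugacy classes, so that Frobenius' original theorem suffices), and sum the densities. There is nothing to compare against, however: the paper does not prove Theorem~\ref{thm:frob} at all. It is quoted in the introduction as a classical result attributed to Frobenius, alongside Dedekind's criterion, purely to motivate Definition~\ref{def:inv} and the notion of invariable generation; no proof or sketch is offered. So your proposal goes beyond what the paper does, and what you have written is exactly the standard black-box reduction one would expect.
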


As (a special case of) a well-known theorem of Jordan, a subgroup intersecting all conjugacy classes of $S_n$ nontrivially must be $S_n$ itself. Thus, Dedekind's and Frobenius' theorems guarantee that, in the case $\Gal(f/\mathbb{Q})=S_n$, the above reduction process will eventually correctly identify the Galois group, if sufficiently many primes are chosen. Efficient bounds on the number of primes required to succeed with certainty are hard to obtain (and depend of course on the concrete polynomial $f$), see, e.g., \cite{LMO}. We instead ask how many primes are needed to succeed ``with high probability". 

The following definition is directly motivated by Frobenius' theorem, which yields, for each mod-$p$ reduction of a given polynomial $f$, not a concrete element, but only a cycle type which is guaranteed to occur in the Galois group of $f$.

\begin{defi}
\label{def:inv}
Let $G\le S_n$, let $\mathcal{S}$ be a family of subgroups \underline{of $S_n$}, and $\sigma_1,\dots, \sigma_r\in G$. Say that $\sigma_1,\dots, \sigma_r$ invariably generate a member of $\mathcal{S}$, if for all $\tau_1,\dots, \tau_r\in S_n$ such that $\tau_i$ is conjugate \underline{in $S_n$} to $\sigma_i$ for all $i=1,\dots, r$, the group generated by $\tau_1,\dots, \tau_r$ belongs to $\mathcal{S}$.
\end{defi}

The dependency of the underlined parts in Definition \ref{def:inv} on $S_n$ rather than $G$ is deliberate. Of course, switching them from $S_n$ to $G$ would also be meaningful, and would indeed be common in a purely group theoretical context. E.g., with this notion of ``invariable generation", it is known that every non-abelian simple group $G$ has a pair of elements all of whose $G$-conjugates generate $G$, see \cite[Theorem 1.3]{KLS}, and that the expected number of $G$-conjugacy classes (of an arbitrary group $G$) required to invariably generate $G$ is in $O(\sqrt{|G|})$, see \cite{Luc}. 
However, our version seems natural in view of the Galois theoretical interpretation, since it corresponds to a situation where no prior information on the true Galois group $G\le S_n$ is available.

For the key case $G=S_n$, invariable generation of transitive subgroups  (i.e., $\mathcal{S}$ equal to the set of all transitive subgroups of $S_n$), or indeed of $S_n$ itself, has been studied by a number of authors. 
In particular, the following two key questions have been studied in detail: Firstly, what is the minimal number $r_0$ such that permutations $\sigma_1,\dots, \sigma_{r_0}\in S_n$, chosen independently and uniformly at random from the set $S_n$ (for short: random permutations), invariably generate a transitive subgroup (resp., invariably generate $S_n$) with probability bounded away from $0$ as $n\to \infty$? \footnote{Note that in this question one cannot hope for a probability tending to $1$ for any fixed $r_0$, since the proportion of elements of $S_n$ having a fixed point is bounded away from $0$.} 
 Secondly, what is the {\it average} number of random permutations needed to invariably generate a transitive subgroup (resp., $S_n$)? 
Dixon conjectured in \cite{Dixon} that 
the average number of random permutations required to invariably generate a transitive subgroup should be absolutely bounded from above 
as $n\to \infty$. This was first shown to be true in \cite{LP}.
Later, Pemantle, Peres and Rivin showed in \cite{PPR} that in fact four random permutations invariably generate a transitive subgroup, and even $S_n$, with 
probability bounded away from $0$. On the other hand, for three random permutations, the probability to invariably generate a transitive subgroup is known to tend to $0$ as $n\to \infty$, by \cite{EFG2}. 
All these results have natural interpretations in Galois theory via Dedekind's criterion; e.g., one may expect to obtain an irreducibility certificate for a large degree polynomial with Galois group $S_n$ via a bounded number of modulo-$p$ reductions; three reductions will almost certainly be insufficient, whereas four give a non-negligible chance to succeed. Finally, the aforementioned  \cite{BSK}  may be seen as a more advanced connection between invariable generation and Galois theory,  
 combining results in the spirit of \cite{PPR} with results on factorization of polynomials with restricted coefficients to obtain a ``$100\%$" statement (in the limit $n\to \infty$) on the Galois side. 
%

Motivated by Question \ref{ques:nonsolv}, we are interested in the case where $\mathcal{S}$ in Definition \ref{def:inv} is the set of all nonsolvable subgroups of $S_n$. Concretely, we prove the following.
\begin{thm}
\label{thm:main}
Let $G=S_n$ or $G=A_n$. Then the proportion of pairs $(\sigma, \tau)\in G^2$ which invariably generate a nonsolvable subgroup of $S_n$ is $1-O(1/(\log n)^{2-\epsilon})$, for every $\epsilon>0$.
In particular, two random permutations $\sigma,\tau\in G$ invariably generate a nonsolvable subgroup with probability tending to $1$ as $n\to \infty$.
\end{thm}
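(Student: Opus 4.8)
\emph{Sketch of an approach.} Write ``type'' for cycle type, and recall that two elements of $S_n$ are conjugate in $S_n$ exactly when they have the same type. By Definition~\ref{def:inv}, a pair $(\sigma,\tau)\in G^2$ \emph{fails} to invariably generate a nonsolvable subgroup of $S_n$ if and only if there is a solvable subgroup $H\le S_n$ containing an element of the type of $\sigma$ and an element of the type of $\tau$; I will call such a pair of types \emph{jointly solvable}. So the plan is to bound, for $\sigma,\tau$ chosen independently and uniformly at random from $G$, the probability that their types are jointly solvable, and to show it is $O_\epsilon\big((\log n)^{-2+\epsilon}\big)$. The plan involves three ingredients: a reduction of the case $G=A_n$ to $G=S_n$; a group-theoretic criterion on a pair of types guaranteeing that it is \emph{not} jointly solvable; and a number-theoretic estimate showing that a random pair of types meets this criterion with the stated probability.

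\emph{Reduction to $S_n$.} Every type is either entirely even or entirely odd, and if $\lambda$ is even then a uniform element of $A_n$ has type $\lambda$ with probability $2\cdot\mathrm{Pr}_{S_n}[\lambda]$. Hence for any set $\mathcal B$ of pairs of types, $\mathrm{Pr}_{A_n^2}[\mathcal B]\le 4\,\mathrm{Pr}_{S_n^2}[\mathcal B]$; since ``jointly solvable'' depends only on the types, it therefore suffices to treat $G=S_n$.

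\emph{The group-theoretic criterion.} The structural input I would use is that a solvable group $H\le S_n$ induces on each of its orbits a transitive solvable permutation group, and that a transitive solvable group of degree $m$ embeds into an iterated imprimitive wreath product of affine groups $\mathrm{AGL}(d_i,p_i)$ with $m=\prod_i p_i^{d_i}$ (obtained by repeatedly passing to a minimal normal, hence elementary abelian, subgroup and the block system it induces). Consequently, on an orbit of size $m$ every cycle length of an element of $H$ is a product $c_1\cdots c_k$ of cycle lengths $c_i$ of affine-group elements on $p_i^{d_i}$ points, and is thus severely constrained; for instance a cycle of prime length $\ell>m/2$ forces $m=\ell$ or $m$ to be a prime power larger than $\ell$, and in the prime-degree case $H\le\mathrm{AGL}(1,m)$, so every element of $H$ fixes at most one point and all its remaining cycles have a common length. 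The prototype consequence is that \emph{if $\sigma$ has a cycle of prime length $p$ and $\tau$ a cycle of prime length $q$ with $p\ne q$ and $p,q>n/2$, then $(\sigma,\tau)$ invariably generate a nonsolvable subgroup} (for all large $n$): since $p+q>n$, the two long prime cycles lie in a common orbit of any hypothetical solvable $H$, and the transitive solvable group on that orbit cannot realize both restricted types. The substantial part of the argument is to weaken this to a \emph{genericity} condition on the types of $\sigma$ and $\tau$ that still precludes joint solvability but holds with probability close to $1$: the idea is that, for a typical pair, every orbit-size decomposition compatible with both types places almost all points in a single orbit of some size $m$ with $n-m=o(n)$, and that the type of a random permutation restricted to such an orbit --- which has several macroscopic cycles of ``generic'', multiplicatively unrelated lengths --- simply cannot occur in any transitive solvable group of degree $m$, since the element types of the latter are far too smooth (only few distinct cycle lengths per orbit, and strong divisibility restrictions inherited from the $\mathrm{AGL}(d_i,p_i)$).

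\emph{The probabilistic estimate, and the main difficulty.} The final step estimates the probability that a random pair \emph{fails} the criterion, using standard facts about the cycle structure of a uniformly random permutation --- the number of $k$-cycles has mean $1/k$, the counts are asymptotically independent Poisson variables, the longest cycle is governed by the Dickman function --- together with Mertens' estimate $\sum_{p\le x}1/p=\log\log x+M+O(1/\log x)$ and the independence of $\sigma$ and $\tau$; the outcome is a bound of the form $C\,\log\log n/(\log n)^2$, hence $O_\epsilon\big((\log n)^{-2+\epsilon}\big)$. The main difficulty is exactly the tension between the previous two steps: the transparent certificates --- like ``each of $\sigma,\tau$ has a long prime cycle'' --- are satisfied only with probability $\Theta\big((\log n)^{-2}\big)$, i.e.\ far too rarely to yield anything, so one must work with a much more robust condition (tolerating that $\langle\sigma',\tau'\rangle$ is typically not transitive for all choices of conjugates, and controlling only the large cycles) and still prove that it obstructs solvability. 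Making this condition simultaneously group-theoretically sufficient and probabilistically typical, with the sharp exponent $2-\epsilon$, is where essentially all of the work lies; a secondary technical point is the bookkeeping of which orbit-size decompositions are compatible with two given types, since the small cycles of $\sigma$ and $\tau$ leave a hypothetical solvable $H$ a good deal of freedom.
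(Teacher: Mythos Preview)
Your reduction to $S_n$ and the overall framework (find a criterion on cycle types that forbids joint solvability and is satisfied with high probability) are correct and match the paper's setup. But as you yourself say, the proposal is a sketch: you locate the central difficulty --- producing a criterion that is simultaneously group-theoretically sufficient and probabilistically generic to within $(\log n)^{-2+\epsilon}$ --- and then do not resolve it. Your suggested heuristic, that cycle types inside a transitive solvable group are ``smooth'' (few distinct lengths, strong divisibility constraints from the affine layers), is neither made precise nor proved, and in general iterated wreath products it is not true in any form strong enough to do the job. Likewise, the assertion that for a typical pair every compatible orbit decomposition must have one orbit of size $n-o(n)$ is not established and, given that even three random permutations fail to invariably generate a transitive group, would need a careful argument.

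The concrete ingredient you are missing is Unger's theorem, which the paper uses decisively: with probability $1-O(\log\log n/\log n)$, a single random $\sigma\in S_n$ contains a $p$-cycle for some prime $p\in[(\log n)^2,\,n/2]$ with \emph{all other cycle lengths of $\sigma$ coprime to $p$} (so a power of $\sigma$ is exactly that $p$-cycle). This is a condition on $\sigma$ \emph{alone}, making the argument asymmetric rather than your symmetric ``both have long prime cycles'' prototype; requiring it of at least one of $\sigma,\tau$ already gives failure probability $O((\log\log n/\log n)^2)$. With $\sigma$ so constrained (and $p$ not Mersenne, at further cost $O((\log n)^{-2})$), one looks at the orbit $O$ of $\langle\sigma,\tau^x\rangle$ containing the $p$-cycle. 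If the induced group $U$ on $O$ is primitive and solvable, Jordan's theorem forces $|O|\in\{p,p+1,p+2\}$; the last two are killed by the Mersenne exclusion and an elementary $\mathbb{F}_2$-argument, leaving $U\le AGL_1(p)$, whose non-$p$-cycle elements have type $[1,d,\dots,d]$ with $d\mid p-1$ --- the probability that $\tau$ matches this on $O$ is $O((\log n)^{-2})$ since $p\ge(\log n)^2$. If $U$ is imprimitive, the same Jordan argument forces the minimal block size to be $p$ with block action inside $AGL_1(p)$, and now it is the \emph{remaining cycle structure of $\sigma$} on an adjacent $\sigma$-cycle of blocks that must fit the $[1,d,\dots,d]$ pattern; summing over $p\ge(\log n)^2$, block multiplicities $m$, and divisors of $p-1$ gives $O((\log n)^{-2})$. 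This is the work your sketch defers; without Unger's input and the Jordan/$AGL_1(p)$ reduction, there is no visible path to the stated bound.
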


Regarding Question \ref{ques:nonsolv}, this implies that in order to find a certificate for the nonsolvability of $\Gal(f/\mathbb{Q})$ for a given large degree integer polynomial $f$ with Galois group $S_n$ or $A_n$, it almost certainly suffices to reduce modulo two different primes. Obviously this is best possible, since a single mod-$p$ reduction will only yield one particular cyclic subgroup of $\Gal(f/\mathbb{Q})$, and thus cannot even rule out this group to be cyclic.


The proof of Theorem \ref{thm:main}, contained in Section \ref{sec:proof}, makes decisive use of a recent combinatorial result by Unger (\cite{Unger}); in terms of permutation group theory, it requires only elementary prerequisites. 
In the appendix, we show how to obtain a more concise version of the nonsolvability conclusion of Theorem \ref{thm:main}, which requires some more advanced classification results about primitive groups.


\section{Prerequisites}

We include some basic combinatorial and group-theoretical facts about permutations which will be used later.

\subsection{Permutations with partially prescribed cycle structure}
Let $\lambda$ be a partition of $k$. For any permutation $\sigma \in S_k$ of cycle type $\lambda$, we denote by $\lambda!$ the size of the centralizer $C_{S_k}(\sigma)$ in $S_k$, i.e., $\lambda! = \prod_{i = 1}^{\infty} {i^{m_i(\lambda)}m_i(\lambda)!}$, where $m_i(\lambda)$ denotes the multiplicity of $i$ in $\lambda$.
Recall that the number of permutations of the cycle type corresponding to $\lambda$ is equal to $\frac{k!}{\lambda!}$.

Let $\lambda$ be a partition of $k$ for some $k \le n$. We denote by $N_{\lambda,n}$ the number of permutations in $S_n$ which contain at least $m_1(\lambda)$ $1$-cycles, $m_2(\lambda)$ $2$-cycles, $m_3(\lambda)$ $3$-cycles, and so on, i.e.,
$$
    N_{\lambda,n} = \#\{\sigma \in S_n : \text{the multiplicity of $i$ in the cycle type of $\sigma$ is $\ge m_i(\lambda)$ for each $i$}\}.
$$
To compute (an upper bound of) the number $N_{\lambda, n}$, we can pick a $k$-subset $X$ of $\{1, 2, \dots, n\}$ in $\binom{n}{k}$ ways, then pick a permutation on $X$ of cycle type $\lambda$ in $\frac{k!}{\lambda!}$ ways, and then pick any permutation on $\{1,2,\dots,n\}\setminus X$ in $(n-k)!$ ways. Thus, the total number of ways is equal to
$$
    \binom{n}{k} \cdot \frac{k!}{\lambda!} \cdot (n-k)! = \frac{n!}{\lambda!}.
$$
In the above we have counted permutations of cycle type $\mu$ exactly $\prod_{i=1}^{\infty} \binom{m_i(\mu)}{m_i(\lambda)}$ times, where $\mu$ is a partition of $n$ such that $m_i(\mu) \ge m_i(\lambda)$ for each $i$. Hence, we have $N_{\lambda, n} \le \frac{n!}{\lambda!}$. Consequently, we have:

\begin{lemma}
\label{lem:prob}
Let $m_1, \dots, m_n$ be non-negative integers.
The probability that a random permutation in $S_n$ contains at least $m_1$ 1-cycles, $m_2$ 2-cycles, $m_3$ 3-cycles, and so on, cannot exceed $\prod_{i=1}^{n}\dfrac{1}{i^{m_i} m_i !}$. In particular, the probability that a random permutation in $S_n$ contains at least one $k$-cycle is less than or equal to $\frac{1}{k}$.
\end{lemma}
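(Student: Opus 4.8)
The plan is to translate the statement directly into the counting already carried out in the paragraph preceding the lemma. Given the nonnegative integers $m_1,\dots,m_n$, I would let $\lambda$ be the partition of $k:=\sum_{i} i\,m_i$ whose multiplicities are $m_i(\lambda)=m_i$. If $k>n$ the prescribed event is impossible and the bound holds trivially, so I may assume $k\le n$. The crucial observation is that the set of permutations in the event ``contains at least $m_i$ cycles of length $i$ for every $i$'' is \emph{exactly} the set of size $N_{\lambda,n}$ introduced above. Hence, since ``random permutation in $S_n$'' means uniform over $S_n$, the probability in question equals $N_{\lambda,n}/n!$.

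Next I would invoke the bound $N_{\lambda,n}\le n!/\lambda!$ established just before the lemma. Recall the mechanism of that derivation: the product $\binom{n}{k}\cdot\frac{k!}{\lambda!}\cdot(n-k)!=\frac{n!}{\lambda!}$ counts each permutation of a cycle type $\mu$ satisfying $m_i(\mu)\ge m_i(\lambda)$ with multiplicity $\prod_{i}\binom{m_i(\mu)}{m_i(\lambda)}$, a quantity that is always at least $1$; this is precisely what converts the equality into the inequality $N_{\lambda,n}\le n!/\lambda!$. Dividing by $n!$ and recalling $\lambda!=\prod_i i^{m_i(\lambda)}m_i(\lambda)!$ then yields
$$
\frac{N_{\lambda,n}}{n!}\le\frac{1}{\lambda!}=\prod_{i=1}^{n}\frac{1}{i^{m_i}\,m_i!},
$$
which is the main assertion.

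Finally, for the ``in particular'' clause I would specialize to $m_k=1$ and $m_i=0$ for $i\ne k$; the event becomes ``contains at least one $k$-cycle,'' and the right-hand side collapses to $1/(k^{1}\cdot 1!)=1/k$. I do not anticipate any genuine obstacle, as the content is entirely bookkeeping built on the preceding computation. The only point meriting a moment of care is the overcounting inequality $\prod_{i}\binom{m_i(\mu)}{m_i(\lambda)}\ge 1$, which is what secures $N_{\lambda,n}\le n!/\lambda!$ rather than an equality (the count is exact only when no cycle type strictly exceeds the prescribed multiplicities, e.g.\ in the ``at least one $k$-cycle'' case when $k>n/2$).
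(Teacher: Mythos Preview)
Your proposal is correct and follows precisely the paper's own approach: the lemma is stated as an immediate consequence (``Consequently, we have'') of the overcounting argument in the preceding paragraph, and you reproduce that reasoning, including the specialization $m_k=1$, $m_i=0$ otherwise, for the ``in particular'' clause.
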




\subsection{Primitive and imprimitive permutation groups}

A transitive permutation group $G\le S_n$ is called primitive if it does not preserve a non-trivial block system in $\{1,\dots, n\}$, or equivalently, if the point stabilizers are maximal subgroups in $G$. The following are classical results about the structure of primitive permutation groups. The first one is due to Jordan (see \cite[Theorem 3.3E]{DM}), the second one was essentially known to Galois (cf.\ \cite{Neumann}).

\begin{thm}[Jordan]
\label{thm:jordan}
Let $G$ be a primitive permutation group of degree $n$, containing a cycle of prime length fixing at least three points. Then $G\ge A_n$.
\end{thm}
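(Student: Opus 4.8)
The plan is to reconstruct Jordan's classical argument through the theory of \emph{Jordan sets}. Write $\sigma$ for the given cycle of prime length $p$, let $\Delta = \mathrm{supp}(\sigma)$ be its support (so $|\Delta| = p$), and let $F = \Omega \setminus \Delta$ be its set of fixed points, where $\Omega = \{1,\dots,n\}$; the hypothesis ``fixes at least three points'' is exactly $|F| \ge 3$, i.e.\ $p \le n-3$. Call a nonempty proper subset $\Gamma \subsetneq \Omega$ a Jordan set if the pointwise stabilizer $G_{(\Omega\setminus\Gamma)}$ acts transitively on $\Gamma$. The first observation is that $\Delta$ itself is a Jordan set: since $\sigma$ fixes $F$ pointwise we have $\langle\sigma\rangle \le G_{(F)} = G_{(\Omega\setminus\Delta)}$, and $\langle\sigma\rangle$ is already transitive on $\Delta$. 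Primality of $p$ enters here, in that $\langle\sigma\rangle \cong C_p$ acts regularly, hence primitively, on $\Delta$, so no nontrivial block structure survives on the support.

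The heart of the argument is to use primitivity of $G$ to enlarge Jordan sets until $G$ is seen to be $2$-transitive. The basic tool is the union lemma: if $\Gamma_1,\Gamma_2$ are Jordan sets with $\Gamma_1\cap\Gamma_2\ne\emptyset$, then $\Gamma_1\cup\Gamma_2$ is again a Jordan set, because $G_{(\Omega\setminus\Gamma_1)}$ and $G_{(\Omega\setminus\Gamma_2)}$ both fix $\Omega\setminus(\Gamma_1\cup\Gamma_2)$ pointwise and together generate a subgroup transitive on $\Gamma_1\cup\Gamma_2$ (the two orbits being linked through a common point). Now take a Jordan set $\Gamma$ of maximal size subject to $\Gamma\ne\Omega$. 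If its complement had size at least $2$, then $\Gamma$ would not be a block of any system (primitivity forbids blocks of size strictly between $1$ and $n$), so some translate $g\Gamma$ would meet $\Gamma$ without equalling it; the union lemma then produces a strictly larger proper Jordan set, unless for every such $g$ the union is all of $\Omega$ --- but that forces the complement $\Omega\setminus\Gamma$ to be a block, again contradicting primitivity. Hence the maximal proper Jordan set has a single-point complement $\{\alpha\}$, meaning $G_\alpha$ is transitive on $\Omega\setminus\{\alpha\}$; since all point stabilizers are conjugate, $G$ is $2$-transitive.

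From here the goal is to manufacture a $3$-cycle in $G$ and invoke the base case. Using a translate $\Delta^g$ of $\Delta$ meeting it properly (available by primitivity), both $\sigma$ and $\sigma^g$ fix the symmetric difference of the two supports pointwise, so $[\sigma,\sigma^g]$ is supported inside $\Delta\cup\Delta^g$; iterating this support-reduction, choosing overlaps so that supports shrink, one drives the support down to length $3$, producing a $3$-cycle (or a Jordan set of size $2$ or $3$, which supplies a transposition or a $3$-cycle directly). The base case is then classical: if a primitive $G$ contains a $3$-cycle, the subgroup $M$ generated by \emph{all} $3$-cycles of $G$ satisfies $M\trianglelefteq G$ and $M\ne 1$, hence is transitive by primitivity, and a connectivity argument on overlapping $3$-cycles forces $M = A_n$, so $G \ge A_n$; a transposition would give $G = S_n$ similarly. \textbf{The main obstacle} is precisely this support-reduction bookkeeping together with the exclusion of exceptional configurations, and it is here that the hypothesis $|F|\ge 3$ is indispensable. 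Primitive groups containing a cycle of prime length that fixes only $0$, $1$ or $2$ points genuinely exist and need \emph{not} contain $A_n$ --- the affine groups $\mathrm{AGL}_1(p)$, the groups $\mathrm{PSL}_2(p)\le G\le \mathrm{P\Gamma L}_2(p)$ on the projective line, and the Mathieu groups are exactly the offenders --- so the delicate point is to show that with three or more fixed points none of these can arise, equivalently that the support-reduction never stalls before reaching length $3$.
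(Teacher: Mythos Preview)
The paper does not prove this theorem; it is quoted as a classical result of Jordan with a pointer to \cite[Theorem~3.3E]{DM}, so there is no in-paper argument to compare against. Your Jordan-set framework is precisely the one used in that reference, and the first two steps of your sketch (recognizing $\Delta=\mathrm{supp}(\sigma)$ as a primitive Jordan set, and enlarging Jordan sets via the union lemma to reach $2$-transitivity) are on the right track, if somewhat loose in the block argument.

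Your Step~3, however, contains a concrete error and does not work as written. You assert that ``both $\sigma$ and $\sigma^g$ fix the symmetric difference of the two supports pointwise''; this is false, since $\sigma$ moves every point of its support $\Delta$, in particular every point of $\Delta\setminus\Delta^g\subseteq \Delta\triangle\Delta^g$. What is true is that both permutations fix the \emph{complement of the union} pointwise, so $[\sigma,\sigma^g]$ is supported inside $\Delta\cup\Delta^g$; but $|\Delta\cup\Delta^g|>|\Delta|$ whenever the overlap is proper, so no reduction of support has taken place at all. Iterating such commutators gives no mechanism for the support to shrink toward~$3$, and the proof in Dixon--Mortimer (or Wielandt) does not proceed this way. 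Instead, the primitive Jordan set $\Delta$ of size $p$ is shown to yield Jordan sets of \emph{every} size between $p$ and $n-1$, whence $G$ is $(n-p+1)$-transitive; since $n-p+1\ge 4$, one then fixes a point of $\Omega\setminus\Delta$ and inducts on the degree (the point stabilizer being again primitive and still containing $\sigma$), with the small base cases handled directly. Your closing remark that the support-reduction ``bookkeeping'' is the main obstacle understates the issue: the reduction step, as you have formulated it, is simply not valid.
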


\begin{thm}
\label{thm:affine}
Let $G\le S_n$ be a primitive and solvable group. Then $G$ acts as an affine group on some vector space. In particular, $n$ is a prime power. If furthermore $n=p$ is a prime, then $G$ is contained in the normalizer $N_{S_p}(\langle \sigma\rangle) \cong C_p\rtimes C_{p-1} =:AGL_1(p)$, for some $p$-cycle $\sigma$.
\end{thm}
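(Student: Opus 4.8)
The plan is to run the standard socle analysis for primitive groups, specialized to the solvable case. First I would pick a minimal normal subgroup $N$ of $G$. Since $G$ is solvable, so is $N$, and since $N$ is a minimal normal subgroup it is characteristically simple; solvability then forces $N$ to be elementary abelian, say $N\cong C_p^d$ for a prime $p$ and some $d\ge 1$ (concretely, the derived subgroup and the $p$-torsion of $N$ are characteristic in $N$, hence normal in $G$, so minimality rules out any proper nontrivial such subgroup).

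Next I would use primitivity to pin down the action of $N$. For any transitive group the orbits of a normal subgroup form a block system; since $G$ is primitive and $N\neq 1$, the (nontrivial) $N$-orbits must exhaust $\{1,\dots,n\}$, i.e.\ $N$ is transitive. An abelian transitive group is automatically regular, because in such a group all point stabilizers coincide (they are conjugate, hence equal) and therefore fix every point, so must be trivial by faithfulness. Therefore $n=|N|=p^d$, which already establishes the prime-power assertion.

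I would then install the affine structure. Identifying the point set with the additive group $V:=N\cong\mathbb{F}_p^d$ via the regular action of $N$, the subgroup $N$ acts by translations. Writing $G_0$ for the stabilizer of $0\in V$, regularity gives $N\cap G_0=1$ and $|G:G_0|=|N|$, so $G=N\rtimes G_0$; and since $N\trianglelefteq G$, the conjugation action of $G_0$ on $N$ realizes $G_0$ as a group of automorphisms of $N$, which are automatically $\mathbb{F}_p$-linear because $\mathrm{Aut}(C_p^d)=GL_d(p)$. Under the regular identification this conjugation action agrees with the original permutation action of $G_0$ on $V$, so $G$ embeds into and acts as a subgroup of $AGL_d(p)=V\rtimes GL_d(p)$, as claimed.

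Finally, the prime case $n=p$ forces $d=1$, whence $N=\langle\sigma\rangle$ is cyclic of order $p$ generated by a $p$-cycle $\sigma$ (its generator is a regular permutation on $p$ points). Since $N\trianglelefteq G$, the group $G$ normalizes $\langle\sigma\rangle$ and hence $G\le N_{S_p}(\langle\sigma\rangle)$; the $d=1$ instance of the affine embedding identifies this normalizer with $AGL_1(p)\cong C_p\rtimes C_{p-1}$. I expect the only genuinely delicate point to be the bookkeeping in the affine identification — checking that the conjugation action of $G_0$ coincides with its permutation action and is linear, and that the representation of $G=N\rtimes G_0$ is faithful (the latter being immediate from $G\le S_n$) — while the transitivity and regularity of $N$ and the order computation for $N_{S_p}(\langle\sigma\rangle)$ are entirely standard.
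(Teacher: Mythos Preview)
Your argument is correct and is the standard socle analysis for primitive solvable groups. Note, however, that the paper does not actually give a proof of this theorem: it is stated as a classical fact (``essentially known to Galois'') with a reference to \cite{Neumann}, and is used as a black box in the proof of Theorem~\ref{thm:main}. So there is no proof in the paper to compare against; what you have written is exactly the textbook justification one would supply if asked to fill in the details, and all the steps (characteristic simplicity of a minimal normal subgroup, transitivity from primitivity via the block argument, regularity of an abelian transitive group, and the identification $G=N\rtimes G_0\hookrightarrow AGL_d(p)$) are sound.
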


Recall furthermore that if $G\le S_n$ acts transitively but imprimitively, then there exist $a,b>1$ such that $n=ab$ and transitive groups  $U\le S_a$, $V\le S_b$ such that $G$ embeds into the wreath product $U\wr V := U^b \rtimes V$ (with $V$ acting via permuting the $b$ copies of $U$). Furthermore, if $G_1<G$ denotes a point stabilizer, then there exists $G_1 < H_1 < G$ such that the image of $G$ in the action on cosets of $H_1$ equals $V$, and the image of $H_1$ in the action on cosets of $G_1$ in $H_1$ equals $U$. See, e.g., \cite{DM} for more background on primitive and imprimitive permutation groups.

\section{Proof of Theorem \ref{thm:main}}
\label{sec:proof}
\begin{proof}
Firstly, it obviously suffices to consider $G=S_n$. Since $A_n$ comprises half the permutations of $S_n$, the assertion for $A_n$ follows readily.
We pick two permutations $\sigma,\tau\in S_n$ independently at random. 
We will show that the following property of $\sigma, \tau$ holds with probability $1-O((\frac{\log\log n}{\log n})^2)$.
\begin{equation}
\label{eq:1} \text{For all } x\in S_n, \text{ the subgroup } \langle \sigma, \tau^x\rangle \le S_n \text{ is nonsolvable}.
\end{equation}

We will achieve this through a series of claims. Note that whenever we encounter a property fulfilled by a proportion of $O((\frac{\log\log n}{\log n})^2)$ pairs of permutations $(\sigma,\tau)\in S_n^2$ (or in particular, a property fulfilled by a proportion of $O((\frac{\log \log n}{\log n})^2)$ permutations $\sigma\in S_n$), we may exclude the permutations fulfilling this property from the further considerations, simply since the union of a bounded number of sets of a certain asymptotic $O(f(n))$ is still in $O(f(n))$.

{\it Step I}: We first claim that the following property holds for a proportion $1-O((\frac{\log\log n}{\log n})^2)$ of all pairs of permutations $(\sigma,\tau) \in S_n^2$:
\begin{equation}
\label{eq:2}
\begin{split}
\text{ There exists a prime } p\in [\log(n)^2, n/2]  \text{ such that at least one of } \sigma, \tau \\ \text{ contains a } p-\text{ cycle and has all of its other cycle lengths coprime to } p.
\end{split}
\end{equation}

Indeed, it suffices to show that a proportion $1-O(\frac{\log\log n}{\log n})$ of permutations $\sigma\in S_n$ fulfill the condition in \eqref{eq:2}. {\it Without} the upper bound condition $p\le n/2$, this is shown in Theorems 2 and 5 of \cite{Unger}. To see that the upper bound may be added without changing the estimate on the asymptotic proportion, note that due to Lemma \ref{lem:prob}, the probability for $\sigma$ to contain a $p$-cycle for some prime $p$ larger than $n/2$ is at most - and in fact in this special case, equal to (since the individual events are mutually exclusive) $\sum\limits_{\stackrel{p\in \mathbb{P}}{n/2< p\le n}} \frac{1}{p}$. Now it is well known that $(\sum\limits_{\stackrel{p\in \mathbb{P}}{p\le n}} \frac{1}{p}) - \log\log n$ converges to a finite value $M$ (namely, the so-called Meissel-Mertens constant) for $n\to \infty$. Therefore, as $n$ grows, $\sum\limits_{\stackrel{p\in \mathbb{P}}{n/2< p\le n}} \frac{1}{p}$ approaches $\log\log n - \log\log(\frac{n}{2}) = \log\log n - \log(\log n-\log(2))$. Since $\log(x) - \log(x-c)$ is of the order of growth of $\frac{c}{x}$ (for $x\to \infty$ and $c$ constant), we obtain in total that the probability that $\sigma$ contains a $p$-cycle for some prime $p>\frac{n}{2}$ is in $O(\frac{1}{\log n})$, and a fortiori in $O(\frac{\log\log n}{\log n})$. Thus, the requirement $p\le n/2$ in Condition \eqref{eq:2} may be added without changing the asymptotic.

From now on, we thus may and will assume that Condition \ref{eq:2} holds (for one of $\sigma$ and $\tau$, and for some prime $p\in [\log(n)^2, n/2]$). Up to renaming, we may and will assume from now on that $\sigma$ fulfills the condition, and will furthermore denote the $p$-cycle of $\sigma$ by $(1,2,\dots, p)$, which is of course also without loss of generality.

{\it Step II}: We next claim that the proportion of $\sigma\in S_n$ containing a cycle of length a Mersenne prime $\ge \log(n)^2$ is $O(\frac{1}{\log(n)^2})$, whence we may and will additionally assume in the following that the prime $p$ in Condition \eqref{eq:2} is {\it not} a Mersenne prime.

Indeed, we may again use Lemma \ref{lem:prob} together with a simple geometric series estimate, to bound the probability for $\sigma$ to contain a cycle of some length $2^k-1\ge \log(n)^2$ from above by $\sum\limits_{j=0}^\infty \frac{1}{\log(n)^2} \cdot (\frac{1}{2})^j$, which is in $O(\frac{1}{\log(n)^2})$. 

We now include the second permutation $\tau$ in our considerations, and (for all possible choices of $x\in S_n$) look at the length of the orbit of the subgroup $\langle \sigma, \tau^x\rangle$ containing the orbit $\{1,\dots, p\}$ of $\langle \sigma\rangle$. We denote this orbit by $O$ and from hereon consider the image $U$ of $\langle\sigma, \tau^x\rangle$ in its action on $O$. Of course it suffices to show that for a proportion $1-O((\frac{\log \log n}{\log n})^2)$ of pairs $(\sigma,\tau)$, there exists no $x$ such that $U$ is solvable.

{\it Step III}: We claim that, for any $\sigma$ as above, the probability (in $\tau$) that there exists at least one $x\in S_n$ for which the above $U\le Sym(O)$ is solvable and primitive is $O(\frac{1}{\log(n)^2})$.

Indeed, due to Step I, $U$ contains a $p$-cycle (namely, a suitable power of $\sigma$, restricted to $O$).  Thus, from Theorem \ref{thm:jordan}, if $U$ is primitive and solvable, then certainly $|O| \in \{p, p+1, p+2\}$. The case $|O| = p+2$ can also be excluded by elementary means.\footnote{Or indeed by non-elementary ones such as the classification result in \cite[Theorem 1.2]{Jones}.} Indeed, by Theorem \ref{thm:affine}, a primitive solvable group is affine, i.e., contained in $AGL(V) \cong V\rtimes GL(V)$ for some finite vector space $V$. The point stabilizers in $AGL(V)$ are conjugate to (the point stabilizer of the zero vector) $GL(V)$, so if $U\le AGL(V)$ contains a cycle fixing exactly two points, then in particular $GL(V)$ contains an element fixing exactly one non-zero vector. But the latter can only happen if $V$ is an $\mathbb{F}_2$-vector space, i.e., $|O| = p+2$ is a $2$-power, which is impossible.
Furthermore, the case $|O|=p+1$ can be excluded via Step II, since if $U$ were primitive and solvable (and thus, affine) of degree $p+1$, then $p+1$ would be a prime power, i.e., $p$ would be a Mersenne prime. So we are left with the case that $U$ is primitive and solvable of degree $p$, i.e., $U\le AGL_1(p)$. But the only elements of $AGL_1(p)$ are $p$-cycles and powers of a $(p-1)$-cycle, i.e., elements of cycle type $[1,d,\dots, d]$ for $d$ dividing $p-1$. We need to bound the proportion of $\tau\in S_n$ having such a pattern in their cycle structure. Of course, since $p\ge \log(n)^2$, the probability for $\tau$ to contain a $p$-cycle is $\le \frac{1}{\log(n)^2}$ by Lemma \ref{lem:prob}. On the other hand, via setting $a:=\frac{p-1}{d}$, the combined probability to contain $a$ cycles of length $d$ for any $d|p-1$ is bounded from above by $\sum\limits_{a | p-1} \frac{a^a}{(p-1)^a \cdot a!}$. We may now use the inequality $$a! \ge \sqrt{2\pi} a^{a+1/2}\exp(-a),$$
(a part of Stirling's formula valid for all $a\in \mathbb{N}$), to obtain an upper bound $\frac{1}{\sqrt{2\pi}} \sum\limits_{a | p-1} \frac{\exp(a)}{(p-1)^a \sqrt{a}} \le \sum_{a\ge 1} (\frac{e}{p-1})^a$. Due to $p\ge \log(n)^2$, the term for $a=1$ gives a contribution $O(\frac{1}{\log(n)^2})$, whereas the remaining sum is strictly smaller. This completes the proof of the claim of Step III.




We may therefore assume from now on that $\sigma$ and $\tau$ are such that, for any $x\in S_n$, the group $U$ above is either nonsolvable or imprimitive on $O$. 
To bound the proportion of pairs $(\sigma,\tau)$ for which $U$ is solvable and imprimitive for at least one $x\in S_n$, we return to the investigation of the cycle structure of $\sigma$.

 Let $\{O_1,\dots, O_k\}$ (with $\cup_{i=1}^k O_i = O$) be a block system of minimal block length preserved by  the corresponding group $U$.
%
%
Then $U\le G\wr H = (\underbrace{G\times \dots\times G}_{k \text{ times}})\rtimes H$, where $G\le Sym(O_1)$ is primitive (due to the minimality assumption), and $H\le S_k$ is transitive. Furthermore, recall that some power of $\sigma$ acts as a $p$-cycle on $O$, and this $p$-cycle necessarily lies in one component $G$ (i.e., it permutes one block while fixing all the others pointwise). We assume $U$ to be solvable, in which case $G$ is solvable (and in particular affine) as well. But $G$ also contains a $p$-cycle, so $p<|O_1|-2$ is excluded by Theorem \ref{thm:jordan}, and $|O_1|\in \{p+1, p+2\}$ can be excluded as well, as already shown in Step II.

We have thus obtained $|O_1|=p$, so $G\le AGL_1(p)$.
We choose the ordering of the blocks $O_i$ such that $O_1\ne \{1,\dots, p\}$
Assume without loss of generality that $\sigma$ permutes the $m$ blocks $O_1,\dots, O_m$ cyclically ($m\ge 1$), and denote the corresponding cycle lengths on the order $mp$ set $O_1\cup\dots \cup O_m$ by $md_1$, $\dots$, $md_r$ ($d_i\in \mathbb{N}$). Then $\sigma^m$ fixes the block $O_1$ setwise, and induces a permutation of cycle lengths $d_1,\dots, d_r$ on this block. But this permutation needs to be contained in $AGL_1(p)$, whence either $r=1$ and $d_1=p$, or $d_1=1$ and $d_2=\dots = d_r =:d$. The first scenario is impossible since $\sigma$ is assumed to contain only one $p$-cycle $(1,\dots, p)$ and all other cycles of length coprime to $p$. Setting $a:=\frac{p-1}{d}$, we are left with the case that $\sigma$ contains (among others) cycles of the following lengths:
\begin{itemize}
    \item[i)] One cycle of length $p$ for {\it some} prime $p\ge \log(n)^2$,
    \item[ii)] One cycle of length $m$, and $a$ cycles of length $\frac{m(p-1)}{a}$, for some $a|p-1$ and $m\in \mathbb{N}$.
    \end{itemize}

{\it Step IV}: 
We claim that the proportion of $\sigma\in S_n$ fulfilling i) and ii) above simultaneously is in $O(\frac{1}{\log(n)^2})$, which due to the above is enough to complete the proof. 

Indeed, due to Lemma \ref{lem:prob}, the proportion of $\sigma\in S_n$ fulfilling both i) and ii) is bounded from above by $$\sum\limits_{\stackrel{p\in \mathbb{P}}{p\ge \log(n)^2}}
\sum\limits_{a|p-1} \sum\limits_{m\in \mathbb{N}} \frac{1}{pm \cdot (\frac{m(p-1)}{a})^a \cdot a!} \le \sum\sum\sum \frac{a^a}{(p-1)^{a+1} m^{a+1}\cdot a!} = $$
$$ = \sum\limits_{\stackrel{p\in \mathbb{P}}{p\ge \log(n)^2}} \frac{1}{(p-1)^2}\sum\limits_{m\in \mathbb{N}} \frac{1}{m^2} + \sum\limits_{\stackrel{p\in \mathbb{P}}{p\ge \log(n)^2}}  \sum\limits_{2\le a|p-1}\sum_{m\in \mathbb{N}} \frac{a^a}{(p-1)^{a+1}m^{a+1} a!}$$
The first sum is obviously, up to constant factor $\le \frac{\pi^2}{6}$, bounded from above by $\sum\limits_{k=\log(n)^2}^\infty \frac{1}{k^2} = O(\frac{1}{\log(n)^2})$. To further bound the second sum, we once again use the inequality \\$a! \ge \sqrt{2\pi} a^{a+1/2}\exp(-a)$ to obtain an upper bound
$$\frac{1}{\sqrt{2\pi}} \sum\limits_{p\ge \log(n)^2}\sum_{2\le a|p-1}\sum\limits_{m} \frac{\exp(a)}{(p-1)^{a+1} m^3 \sqrt{a}}\le \frac{1}{\sqrt{2\pi}} \sum\limits_{m\ge 1} \frac{1}{m^3} \sum\limits_{p\ge \log(n)^2} \frac{\exp(2)}{(p-1)^3} \sum\limits_{a\le p} \frac{1}{\sqrt{a}}.$$
The last sum over $a$ is bounded from above by $2\sqrt{p}$, so the double sum over $p$ and $a$ is, up to constant factor, bounded from above by $\sum\limits_{p\ge \log(n)^2} \frac{1}{(p-1)^{5/2}} = O(\frac{1}{(\log(n)^2)^{3/2}})$. Since the sum over $m$ is absolutely bounded, the whole expression is in $O(\frac{1}{\log(n)^3})$, showing the claim and completing the proof.
\end{proof}

\begin{remark}
If one only intents to show the ``limit equals $1$" part of Theorem \ref{thm:main}, a few shortcuts in the above proof are possible. E.g., Step I may be shortened via replacing \cite{Unger} by \cite{GPU} which directly asserts that ``most" permutations $\sigma\in S_n$ power to a cycle of prime length $p\in [\log n, (\log n)^{\log \log n}]$; however, this comes at the cost of a worse asymptotic. Our own asymptotic bound should also still be far from optimal (compare the experimental data in the next section), mainly because the strong Condition \eqref{eq:2} is far from necessary to invariably generate a nonsolvable subgroup.
\end{remark}

The assertion of Theorem \ref{thm:main} can easily be translated into a statement about the mean value of the number of (independently chosen) random permutations required to invariably generate a nonsolvable subgroup of $S_n$.

\begin{cor}
\label{cor:mean}
Let $\sigma_1,\sigma_2,\dots$ be independent random permutations in $S_n$ or $A_n$, and let $N_n :=\min\{r \in \mathbb{N}\mid \sigma_1,\dots, \sigma_r \text{ invariably generate a nonsolvable group}\}$. Then $\lim_{n\to \infty} E(N_n) = 2$. 
\end{cor}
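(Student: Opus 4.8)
The plan is to deduce Corollary~\ref{cor:mean} directly from Theorem~\ref{thm:main} together with two trivial observations: that $N_n \ge 2$ always, and that $N_n$ has an exponentially decaying tail uniformly in $n$. The lower bound $E(N_n)\ge 2$ is immediate, since a single permutation $\sigma_1$ generates a cyclic (hence solvable) group, and all of its $S_n$-conjugates are cyclic of the same order, so $N_n\ge 2$ with probability $1$. Thus it remains to show $\limsup_{n\to\infty} E(N_n) \le 2$, which amounts to showing that the contribution of the event $\{N_n \ge 3\}$, and more generally $\{N_n \ge r\}$ for $r\ge 3$, to the expectation is negligible.

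The key point is a uniform tail bound: I claim there is an absolute constant $c<1$ and $n_0$ such that for all $n\ge n_0$ and all $r\ge 2$, $\Pr(N_n \ge r+1)\le c^{r-1}$. To see this, group the permutations into consecutive pairs $(\sigma_1,\sigma_2), (\sigma_3,\sigma_4),\dots$; by Theorem~\ref{thm:main}, each pair independently invariably generates a nonsolvable subgroup with probability $p_n \to 1$, so in particular $p_n \ge 1/2$ for $n\ge n_0$. If any one of the first $\lfloor r/2\rfloor$ pairs invariably generates a nonsolvable group, then so does the whole collection $\sigma_1,\dots,\sigma_r$ (adding generators can only enlarge each $\langle\tau_1,\dots,\tau_r\rangle$, and a group containing a nonsolvable subgroup is nonsolvable), hence $N_n \le r$. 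Therefore $\Pr(N_n > r) \le (1-p_n)^{\lfloor r/2\rfloor} \le (1/2)^{\lfloor r/2\rfloor}$ for $n\ge n_0$, which is the desired geometric bound with, say, $c = 1/\sqrt2$. Writing $E(N_n) = \sum_{r\ge 0}\Pr(N_n > r) = 1 + \Pr(N_n>1) + \sum_{r\ge 2}\Pr(N_n > r)$ and using $\Pr(N_n>1)=1$ and $\Pr(N_n>2)=1-p_n\to 0$ together with dominated convergence (the summands are dominated by the summable sequence $(1/2)^{\lfloor r/2\rfloor}$, uniformly in $n\ge n_0$) gives $\lim_{n\to\infty} E(N_n) = 1 + 1 + 0 = 2$.

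I do not expect any serious obstacle here; the only mildly delicate point is the interchange of limit and infinite sum in the expectation, which is why one wants the uniform-in-$n$ geometric tail bound rather than merely $\Pr(N_n>2)\to 0$ — the latter alone would not preclude the tail mass from escaping to infinity. The pairing argument handles this cleanly because Theorem~\ref{thm:main} is exactly a statement about pairs, so no new probabilistic input beyond it is needed. One should just be slightly careful to note that $N_n$ is well-defined and finite almost surely (indeed, since $\Pr(N_n>r)\to 0$ as $r\to\infty$ for each fixed $n$), and that the monotonicity ``more generators $\Rightarrow$ still nonsolvable'' used above is valid for the $S_n$-invariable notion of Definition~\ref{def:inv}, which it plainly is since enlarging the generating set enlarges every candidate group $\langle\tau_1,\dots,\tau_r\rangle$.
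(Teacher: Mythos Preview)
Your proof is correct and follows essentially the same approach as the paper: both use that $N_n\ge 2$ deterministically, and both exploit the ``restart after two permutations'' structure together with monotonicity (adding generators preserves invariable nonsolvability) to convert Theorem~\ref{thm:main} into an upper bound on $E(N_n)$. The only cosmetic difference is that the paper packages this as the recursive inequality $E(N_n)\le 2(1-\epsilon_n)+(2+E(N_n))\epsilon_n$, yielding $E(N_n)\le 2/(1-\epsilon_n)$, whereas you spell out the geometric tail bound and sum $E(N_n)=\sum_{r\ge 0}\Pr(N_n>r)$ via dominated convergence; your version has the minor advantage of making finiteness of $E(N_n)$ explicit.
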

\begin{proof}
By Theorem \ref{thm:main}, there exists $\epsilon_n$, tending to $0$ with $n\to\infty$, such that $P(N_n=2)= 1-\epsilon_n$.
Furthermore, within the event $N_n>2$, the mean value is clearly bounded from above by $E(N_n)+2$ (simply by ignoring the first two permutations and seeking to invariably generate a nonsolvable group with the remaining ones). Therefore we have
$$E(N_n) \le 2\cdot(1-\epsilon_n) + (2+E(N_n))\cdot \epsilon_n,$$ or in other words $E(N_n) \le \dfrac{2}{1-\epsilon_n}$, which tends to $2$ for $n\to \infty$.
Conversely, $N_n$ cannot possibly be smaller than $2$, which shows the assertion.
\end{proof}


\section{Some computational data and further directions}
\label{sec:comp}

While Theorem \ref{thm:main} and Corollary \ref{cor:mean} give  assertions about the large degree limit, they say nothing about small values of $n$. We include some computational results on the number $N_n$ of random permutations required to invariably generate a nonsolvable subgroup of $S_n$, for some small $n$. Concretely, we calculated the probabilities $P(N_n=2)$. 
All computations were performed using Magma (\cite{Magma}).\footnote{Since the solvable subgroups of a group such as $S_{25}$ are too numerous to be enumerated in full, the required cycle structures of maximally solvable subgroups were generated iteratively, using the obvious fact that such a subgroup must either be transitive or a direct product of two maximally solvable subgroups of smaller degree.} The values are rounded to three decimal digits for convenience. 

\begin{table}[h!]
\begin{tabular}{c|c
|| c|c
||c|c
}
$G$ & $P(N_n=2)$ 
& $G$ & $P(N_n=2)$ & $G$ & $P(N_n=2)$ 
\\ \hline
$S_5$ & 0.250
& $S_{12}$ & 0.607
&  $S_{19}$ & 0.810 
\\
$S_6$ & 0.244
& $S_{13}$ & 0.660 
&  $S_{20}$ &0.821
\\
$S_7$ & 0.395
& $S_{14}$ & 0.700 
& $S_{21}$& 0.834 
\\
$S_8$ & 0.380
& $S_{15}$ & 0.723 
&  $S_{22}$& 0.851
\\
$S_9$ & 0.461
& $S_{16}$ & 0.730 
& $S_{23}$& 0.864
\\
$S_{10}$ & 0.543
&   $S_{17}$ & 0.764 
& $S_{24}$& 0.870
\\
$S_{11}$ & 0.601
& $S_{18}$& 0.788 
& $S_{25}$& 0.885
\end{tabular}
\caption{Values for small symmetric groups}
\label{tab:values}
\end{table}
\begin{figure}[h!]
    \centering
    \includegraphics[width=80mm]{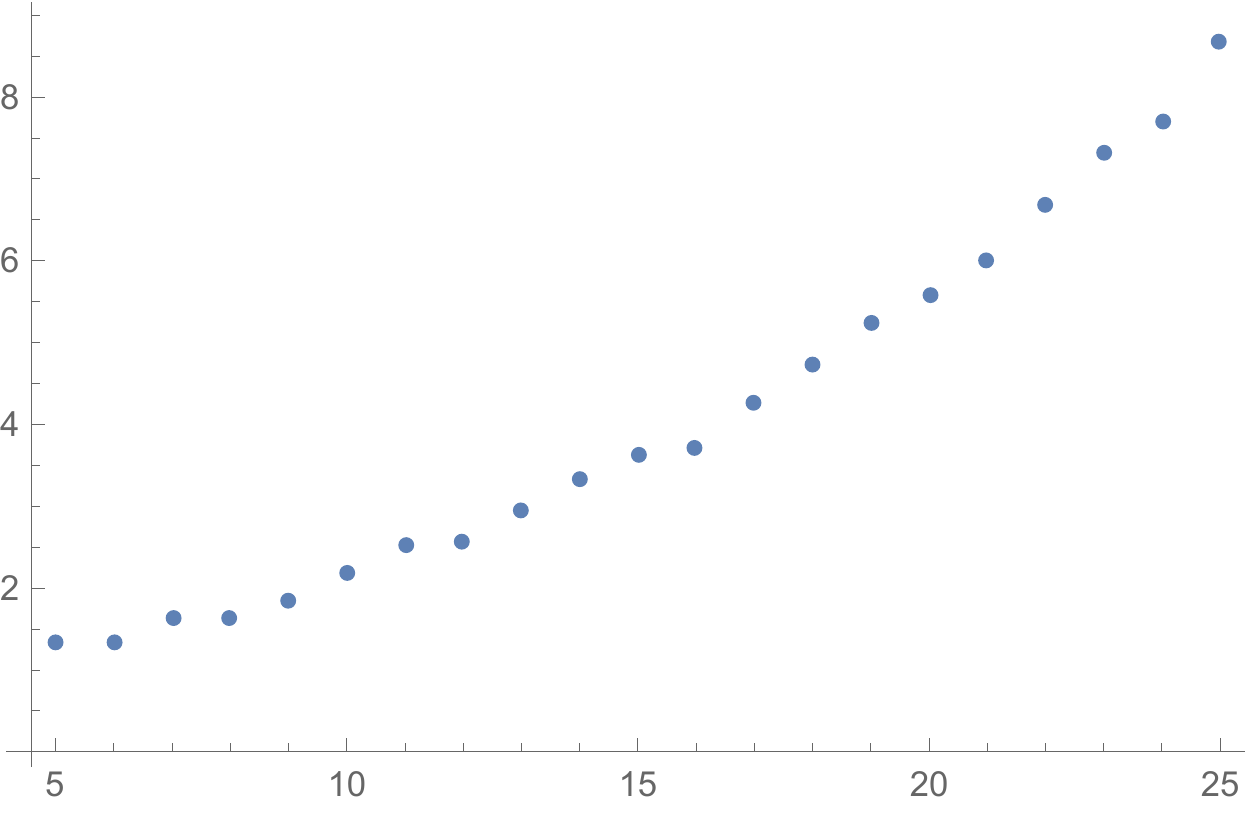}
    \caption{Plot of points $(n, (1-P(N_n=2))^{-1})$}
    \label{fig:plot}
\end{figure}


These values actually seem to suggest a superlinear convergence of the ``exceptional" probability $1-P(N_n=2)$ (see Figure \ref{fig:plot}), although we emphasize that the sample size is too small to draw definite conclusions. Note that $1/n^2$ is a trivial lower bound for $1-P(N_n=2)$, since that is the probability to draw two $n$-cycles, which do not even invariably generate a non-cyclic subgroup. A much deeper result by Blackburn, Britnell and Wildon (\cite[Theorem 1.6]{BBW}) states that the probability for two random permutations in $S_n$ to invariably generate a {\it non-abelian} subgroup is in fact still $1-O(1/n^2)$. It would be interesting to find out whether this (or a similar) asymptotic bound might indeed still hold for the case of invariable generation of non-solvable subgroups, although this should require a much more detailed analysis of cycle structures in solvable groups.

Finally, here are a few thoughts on the case of groups $G$ other than $A_n$ or $S_n$.
Obviously, a result such as Theorem \ref{thm:main} cannot be generalized to arbitrary families of nonsolvable groups $G$, and notably, the probability for two (or more) permutations of $G$ to invariably generate a nonsolvable subgroup of $S_n$ (in the sense of Definition \ref{def:inv}) depends on the given permutation action. E.g., if $G$ acts in its regular permutation action, then every cycle type of $G$ also occurs in the cyclic group of order $|G|$, i.e., for polynomials with such a Galois group, one can never obtain a nonsolvability certificate by relying solely on Dedekind's criterion. But even for primitive groups $G$ this can happen; e.g., $G=A_6$ in its (degree $15$) primitive action on $2$-sets contains only cycle structures which are also contained in the solvable group $S_3\wr AGL_1(5)$. For certain other groups, $P(N=2)>0$ is obvious, e.g., $PSL_2(p)\le S_{p+1}$ in its natural permutation action contains cycle structures $(p,1)$ and $(\frac{p+1}{2}, \frac{p+1}{2})$, which together invariably generate a doubly transitive and (for $p\ge 5$) nonsolvable subgroup of $S_{p+1}$. It would be interesting to know for which groups Dedekind's criterion can detect nonsolvability (i.e., $P(N=r)>0$ for some suitable $r$), resp., can detect it in two steps (i.e., $P(N=2)>0$).


\appendix
\section{A strengthening}
\label{sec:strong}
The proof of Theorem \ref{thm:main} can be adapted without many difficulties to show the following stronger result.

\begin{thm}
\label{thm:gen}
Let $G=S_n$ or $G=A_n$. Then the proportion of random permutations $\sigma,\tau\in G$ which invariably generate a subgroup of $S_n$ with at least one (nonsolvable) alternating composition factor tends to $1$ as $n\to \infty$.
\end{thm}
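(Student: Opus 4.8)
The plan is to run the proof of Theorem \ref{thm:main} almost verbatim, replacing the property ``$U$ is solvable'' by ``$U$ has no (nonsolvable) alternating composition factor'' everywhere, and replacing the one structural input that is specific to solvable groups, namely Theorem \ref{thm:affine}, by Jones's classification of primitive permutation groups containing a cycle (\cite[Theorem 1.2]{Jones}). Steps I and II carry over unchanged: outside a set of pairs $(\sigma,\tau)$ of proportion $O((\tfrac{\log\log n}{\log n})^2)$ we may assume $\sigma$ has a $p$-cycle $(1,\dots,p)$ with $p\in[\log(n)^2,n/2]$ a non-Mersenne prime and all its other cycle lengths coprime to $p$. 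As in the original proof we then pass, for each $x\in S_n$, to the orbit $O\supseteq\{1,\dots,p\}$ of $\langle\sigma,\tau^x\rangle$ and to its image $U$ on $O$; since an alternating composition factor of $U$ is one of $\langle\sigma,\tau^x\rangle$, it suffices to bound, for a fixed admissible $\sigma$, the proportion of $\tau$ for which some $x$ yields a $U$ without alternating composition factor.

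\emph{Primitive case.} If $U$ is primitive it contains the $p$-cycle, so by Jordan (Theorem \ref{thm:jordan}) and Jones, either $p\le|O|-3$ and $U\ge A_{|O|}$ (then $A_{|O|}$ with $|O|\ge p\ge 5$ is the desired factor), or $|O|\in\{p,p+1,p+2\}$ and, up to finitely many sporadic groups of degree $\le 24$ (hence with $p\le 23<\log(n)^2$, excluded), $U$ is one of: $A_{|O|}$ or $S_{|O|}$ (done); an affine group, which by the ``element fixing exactly one nonzero vector'' argument of Step~III forces $|O|$ to be a $2$-power, i.e.\ $p$ Mersenne (excluded); or a group with $PGL_d(q)\le U\le P\Gamma L_d(q)$ acting on $\mathbb{P}^{d-1}(\mathbb{F}_q)$, where $(q^d-1)/(q-1)=|O|$, so $d=2$ with $q\in\{p-1,p\}$, or $d\ge 3$ with $q\le\sqrt{|O|}$. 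In the last case $\tau^x|_O$ lies in $P\Gamma L_d(q)\setminus\{1\}$ and hence has a very restricted cycle type on $|O|$ points. For $d=2$ ($q\asymp p$) every element of $P\Gamma L_2(q)$ on $q+1$ points has cycle type ``$a$ cycles of one length $d'$ plus at most two fixed points'' with $ad'\in\{q-1,q,q+1\}$ (the few field-automorphism classes have $\Omega(\sqrt q)$ fixed points), and Lemma \ref{lem:prob} together with the Stirling sum of Step~III bounds the relevant proportion of $\tau$ by $O(1/(\log n)^2)$. For $d\ge 3$, since $q\le\sqrt{|O|}$ there are only $p^{o(1)}$ distinct cycle types in $P\Gamma L_d(q)$ on $\mathbb{P}^{d-1}$, and by Lemma \ref{lem:prob} each nontrivial one occurs in $\tau$ with probability either $\le 1/\sqrt p$ (if it has a cycle of length $\ge\sqrt p$) or $\le 2^{-\Omega(\sqrt p)}$ (if all cycles are shorter, since then there are $\ge\sqrt p$ of them); summing gives $p^{-1/2+o(1)}=O(1/\log n)$.

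\emph{Imprimitive case.} Fix a block system $\{O_1,\dots,O_k\}$ of minimal block size, so $U\le G\wr H$ with $G$ primitive on a block. A suitable power of $\sigma|_O$ is a $p$-cycle supported inside a single block, hence lies in the normal subgroup $N:=U\cap G^k$; its projection $N_1:=\mathrm{pr}_1(N)$ is then a nontrivial normal subgroup of the primitive group $G$, so $N_1$ is transitive on the block and contains a $p$-cycle, and an elementary argument (using that $p$ is a large prime with $|O_1|\in\{p,p+1,p+2\}$ whenever $G\not\ge A_{|O_1|}$) shows $N_1$ is in fact primitive. Crucially, $N_1$ is a homomorphic image of the normal subgroup $N\trianglelefteq U$, so its composition factors lie among those of $U$. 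Applying Jones to $N_1$: either $N_1\ge A_{|O_1|}$ with $|O_1|\ge p\ge 5$, giving the desired alternating composition factor inside $U$; or $N_1\le AGL_1(p)$, which (as $N_1\trianglelefteq G$) forces $G\le AGL_1(p)$ --- exactly the situation of Step~IV; or $N_1$ is of $PGL_d(q)$-type, which forces $G\le P\Gamma L_d(q)$ with $|O_1|\in\{p,p+1\}$. In this last situation one argues as in Step~IV: choosing $O_1\ne\{1,\dots,p\}$ and assuming $\sigma$ permutes $O_1,\dots,O_m$ cyclically, $\sigma^m|_{O_1}\in P\Gamma L_d(q)$, so $\sigma$ must contain both a $p$-cycle (Step~I) and, for some $m$, cycles of lengths $m$ times the cycle lengths of a nonidentity element of $P\Gamma L_d(q)$; the same Lemma \ref{lem:prob}/Stirling computation as in Step~IV ($d=2$), resp.\ the $2^{-\Omega(\sqrt p)}$-type bound above ($d\ge 3$), bounds the proportion of such $\sigma$ by $O(1/\log n)$. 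Adding up the finitely many exceptional proportions shows the total is $O(1/\log n)\to 0$.

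The main obstacle is the analysis of the genuinely new primitive types $PGL_d(q)\le U\le P\Gamma L_d(q)$: one has to verify that their cycle types on projective space are restrictive enough for Lemma \ref{lem:prob} to bite, with the clean dichotomy being $d=2$ (here $q\asymp p$ and one re-runs the divisor sum of Step~IV) versus $d\ge 3$ (here the bound $q\le\sqrt{|O|}$ forces every nontrivial element to move a positive proportion of the points, which beats the $p^{o(1)}$ count of cycle types). A secondary, purely group-theoretic point is the passage in the imprimitive case from ``the block group contains a large alternating group'' to ``$U$ has a large alternating composition factor'': this is \emph{false} in general (e.g.\ $PSL_2(11)$ acts imprimitively on $55$ points with block group $A_5$, while $\mathrm{PSL}_2(11)$ has no alternating composition factor), and the correct substitute is the observation above that the $p$-cycle lives inside one block and hence inside the normal subgroup $N$, whose composition factors are genuinely among those of $U$. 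Note finally that the conclusion of Theorem \ref{thm:gen} only asserts convergence to $1$ with no rate, which is what makes the cruder $O(1/\log n)$ estimates in the $d\ge 3$ subcases acceptable.
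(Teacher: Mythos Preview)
Your overall strategy is correct and matches the paper's: adapt Steps~I--IV of the proof of Theorem~\ref{thm:main}, using Jones's classification (Proposition~\ref{prop:cycles}) in place of Theorem~\ref{thm:affine}. Your treatment of the imprimitive case via the block kernel $N$ and its projection $N_1$ is exactly the mechanism the paper relies on in its final sentence (``the block kernel projects onto an alternating or symmetric group''), and your $PSL_2(11)$ example correctly illustrates why one must pass through the block \emph{kernel} rather than the block stabilizer; the paper is terser here but does the same thing.

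The one genuine methodological difference is your handling of the projective cases with $d\ge 3$ (and of $d=2$ with $q=p-1$). You keep these cases alive and bound them by counting cycle types in $P\Gamma L_d(q)$ and invoking Lemma~\ref{lem:prob}. The paper instead removes them at Step~II by further restricting the prime $p$: it observes that Cases~3) and~5) of Proposition~\ref{prop:cycles} force $p$ Mersenne (already excluded), Case~2) with $d=2$ forces $p$ to be a Fermat prime, and Case~2) with $d\ge 3$ forces $p=(q^d-1)/(q-1)$; since the reciprocals of Fermat primes and of numbers $(q^d-1)/(q-1)$ with $d\ge 3$ have convergent sums, one may exclude all such $p\ge\log(n)^2$ at a cost of $o(1)$ in probability. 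After this, the only surviving non-alternating primitive possibilities for $U$ (Step~III) and for $G$ (Step~IV) are $AGL_1(p)$ and $PGL_2(p)$ on $p+1$ points, and the cycle types in $PGL_2(p)$ are exactly $(p,1)$, $(d_1,\dots,d_1)$ with $d_1\mid p+1$, or $(d_2,\dots,d_2,1,1)$ with $d_2\mid p-1$, so the Stirling-type divisor sum from Steps~III--IV goes through with only cosmetic changes. This is tidier than your route: it avoids the claim that $P\Gamma L_d(q)$ has only $p^{o(1)}$ cycle types on $\mathbb{P}^{d-1}$ (which, as stated, needs more justification---the number of $(q,d)$ pairs alone is already $p^{1/2+o(1)}$, and for each the class number can be of size $q^d\asymp p$), and it sidesteps any discussion of field-automorphism cosets in $P\Gamma L$.

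In short: your proof is sound, but the paper's extra pruning of $p$ at Step~II buys a shorter and cleaner endgame, reducing everything to the single new case $PGL_2(p)$.
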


To show the strengthening, we need the classification of primitive permutation groups containing a cycle (e.g., \cite[Theorem 1.2]{Jones}).

\begin{prop}
\label{prop:cycles}
Let $G$ be a primitive group of degree $m>23$ not containing $A_m$. Assume that there exists an integer $\ell$ such that $G$ contains an $\ell$-cycle. Then one of the following holds:
\begin{itemize}
    \item[1)] $m=\ell$ prime, and $C_\ell\le G\le AGL_1(\ell)$,
    \item[2)] $m=\ell=(q^d-1)/(q-1)$ for some $d\ge 2$ and prime power $q$; and $PGL_d(q)\le G\le P\Gamma L_d(q)$,
    \item[3)] $m=\ell+1=q^d$ for some $d\ge 1$ and prime power $q$; 
    and $AGL_d(q)\le G\le A\Gamma L_d(q)$,
    \item[4)] $m=\ell+1$ with $\ell$ prime, and $PSL_2(\ell)\le G\le PGL_2(\ell)$; or
    \item[5)] $m=\ell+2=q+1$ for some prime power $q$; and $PGL_2(q)\le G\le P\Gamma L_2(q)$.
\end{itemize}
\end{prop}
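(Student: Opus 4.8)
The plan is to run a systematic case analysis driven by the O'Nan--Scott description of primitive groups and the classification of finite simple groups (CFSG), organized by the number $f:=m-\ell$ of points fixed by the given $\ell$-cycle $g$. Inspecting the conclusion, in each of cases $1)$--$5)$ one has $f\le 2$, so the argument splits into first reducing to $f\in\{0,1,2\}$ and then identifying the primitive groups that can carry a cycle fixing at most two points.

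First I would dispose of the case $f\ge 3$. When $\ell$ is prime this is exactly Jordan's Theorem~\ref{thm:jordan}, which forces $G\ge A_m$, against the hypothesis. For a cycle of arbitrary length fixing at least three points the same conclusion holds by the classical extension of Jordan's theorem due to Marggraff (cf.\ \cite{DM}), proved by induction on the support using primitivity of the appropriate stabilizers. Hence we may assume $\ell\in\{m,m-1,m-2\}$.

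Next I would invoke the O'Nan--Scott theorem and eliminate the diagonal, product and twisted-wreath types: in each of these the socle acts as a (possibly twisted) direct product over at least two factors, which spreads the support of every element across several coordinates and bounds the length of its longest cycle well below $m-2$ once $m>23$; quantifying this rules them out. This leaves the affine and almost simple types. For the affine type $G\le A\Gamma L_d(q)$ with $m=q^d$, a direct analysis of which cycles of length $m,m-1,m-2$ can occur gives: a translation in degree $1$ produces the full cycle of case $1)$ (so $m=q=\ell$ is prime); a Singer cycle in the point stabilizer $GL_d(q)$ produces the $(m-1)$-cycle of case $3)$; and a cycle fixing exactly two points is excluded by the $\ff_2$-obstruction already used in Step III of the proof of Theorem~\ref{thm:main} (it would force $m-2$ to be a $2$-power of the wrong shape).

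The remaining, and by far the hardest, case is the almost simple type, where $\mathrm{soc}(G)=T$ is nonabelian simple and one must run through the CFSG list, deciding for each $T$ and each faithful primitive action whether an $\ell$-cycle with $\ell\ge m-2$ can occur:
\begin{itemize}
\item For $T=A_k$ the natural action yields $G\ge A_m$ (excluded), while the non-natural actions (on $j$-subsets, on uniform partitions, and so on) must be shown to admit no such near-full cycle for $m>23$.
\item For $T$ a classical group, the projective action of $PSL_d(q)$ on the points of $\mathbb{P}^{d-1}(\ff_q)$ carries a Singer cycle, yielding the full cycle of case $2)$ and, when $d=2$, the configurations of cases $4)$ (an $\ell$-cycle fixing one point, with $\ell$ prime, inside $PSL_2(\ell)$) and $5)$ (an $(m-2)$-cycle fixing the two points $0,\infty$ inside $PGL_2(q)$); the crux is to show that no other classical group, and no non-projective action, supplies a cycle of length $\ge m-2$.
\item Exceptional groups of Lie type and sporadic groups are eliminated by comparing element orders (via Zsigmondy primes) against the admissible permutation degrees; the finitely many genuine exceptions are certain actions of small almost simple and sporadic groups of bounded degree, all removed by the hypothesis $m>23$.
\end{itemize}
Collecting the surviving configurations from the affine and almost simple cases yields exactly the list $1)$--$5)$. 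I expect the classical-group bookkeeping in the last bullet to be the principal obstacle: ruling out the many a priori possible primitive actions of $PSL$, $PSU$, $PSp$ and $P\Omega$ requires detailed control of their maximal subgroups and of which element orders force a single long cycle, and this analysis is precisely the substance of \cite[Theorem~1.2]{Jones}.
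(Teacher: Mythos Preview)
The paper does not give its own proof of this proposition: it is quoted verbatim as the classification result \cite[Theorem~1.2]{Jones} and used as a black box in the appendix. So there is no ``paper's proof'' to compare your proposal against.

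Your outline is a reasonable high-level sketch of how the Jones result (building on Feit's earlier work on the $2$-transitive case) is actually obtained: reduce to $f=m-\ell\le 2$ via the Jordan--Marggraff theorem, then apply O'Nan--Scott and CFSG to list the almost simple and affine possibilities. You are also right that the classical-group case analysis is where essentially all the work lies, and you correctly flag this by pointing back to \cite{Jones}. One small caution: in the affine case your remark that an $m$-cycle forces ``degree~$1$'' is true but needs more than ``a translation in degree~$1$ produces the full cycle'' --- you must also argue that no element of $A\Gamma L_d(q)$ with $d\ge 2$ can act as a single $q^d$-cycle, which is a short but genuine argument. In any event, since the paper simply cites the result, your sketch goes well beyond what the paper itself provides.
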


We now show how to adapt the proof of Theorem \ref{thm:main} to obtain Theorem \ref{thm:gen}.
\begin{proof}[Proof of Theorem \ref{thm:gen}]
Choose a prime $p (\ge \log(n)^2$) as in Step I of the proof of Theorem \ref{thm:main}. I.e., the permutation $\sigma$ may be assumed to power to a $p$-cycle. Note next that in Cases 3) and 5) of Proposition \ref{prop:cycles} (with $\ell=p$), $p$ is necessarily a Mersenne prime, which is excluded in Step II of the proof. In Case 2) of Proposition \ref{prop:cycles}, if $d=2$, then $p$ is necessarily a Fermat prime, which can be excluded in the same way; on the other hand, the sum of all reciprocals of numbers $(q^d-1)/(q-1)$ with $d\ge 3$ and $q$ a prime power, is convergent (e.g., bounded from above by the some of all reciprocals of proper powers), hence for sufficiently large $n$, the sum of reciprocals of all such numbers which are additionally $\ge \log(n)^2$ becomes arbitrarily small. We may therefore alter Step II of the proof of Theorem \ref{thm:main} to additionally demand that $p$ is no such number. In particular, for the groups $U$ in Step III and $G$ in Step IV of the proof (which are primitive groups containing a $p$-cycle by construction), Proposition \ref{prop:cycles} then only leaves the possibility to be alternating or symmetric of degree $\ge p$, or to be contained in $AGL_1(p)$ (already dealt with) or in $PGL_2(p)$ in its primitive action on $p+1$ points. But the cycle structures in the latter are similarly restricted as in $AGL_1(p)$: they are all either $(p,1)$, or $(d_1,\dots, d_1)$, or $(d_2,\dots, d_2,1,1)$ with divisors $d_1$ of $p+1$ and $d_2$ of $p-1$. Now using these cycle structures, Step III of the proof may be carried out essentially as above, to show that with probability tending to $1$, $U$ must be alternating, symmetric or imprimitive. Finally, it remains to adapt Step IV to bound the probability for $\sigma$ to have both a $p$-cycle for some prime $p\ge \log(n)^2$, and $\frac{p+1}{d_1}$ cycles of the same length $md_1$ for some divisor $d_1$ of $p+1$ and some $m\in \mathbb{N}$.\footnote{This corresponds to the cycle type $(d_1,\dots, d_1)$ of $PGL_2(p)$ above; the estimate for the cycle types $(d_2,\dots, d_2,1,1)$ is analogous, and indeed easier.} This yields an upper bound essentially as in Step IV above, except that we lose one factor $\frac{1}{m}$. The reader may verify directly that the only somewhat ``critical" case is then $d_1=p+1$ (i.e., $\sigma$ has a $p$-cycle as well as one single cycle of length divisible by $p+1$), which gives a contribution of at most $$\sum\limits_{\stackrel{p\in \mathbb{P}}{p\ge \log(n)^2}} \frac{1}{p(p+1)} \sum_{m=1}^n\frac{1}{m} \in O(\frac{1}{\log(n)^2} \cdot \log(n)) = O(1/\log(n)),$$
which converges to $0$. 
It therefore has been shown that with probability tending to $1$, $\sigma$ and $\tau$ invariably generate a subgroup with at least one alternating composition factor: namely either coming from a homomorphic image $U$ acting primitively as the alternating or symmetric group on some orbit; or from an imprimitive action of a homomorphic image $U$ in which the block kernel projects onto an alternating or symmetric group.
%
%
\end{proof}

\begin{remark} In the above proof, we have made no effort to retain the asymptotic of Theorem \ref{thm:main}, although this could be achieved; e.g., in the last step, since the only critical case is the one where there exists only a single cycle of length $m(p+1)$, the imprimitive group $U$ must have a total number of $m+1$ blocks of length $p+1$ (one containing the $p$-cycle, and $m$ further ones permuted cyclically by $\sigma$). The image of $U$ in the blocks action is then a transitive group of degree $m+1$ containing an $m$-cycle, hence even $2$-transitive and thus covered by Proposition \ref{prop:cycles}. Using this, the set of admissible values $m$ can be restricted sufficiently. \end{remark}

\end{document}